\newcommand{\N}{\mathbb{N}}
\newcommand{\R}{\mathbb{R}}
\newcommand{\id}{\mathds{1}}
\newcommand{\Sph}{\mathbb{S}}
\newtheorem{theorem}{Theorem}[section]
\newtheorem{corollary}[theorem]{Corollary}
\newtheorem{lemma}[theorem]{Lemma}
\newtheorem{proposition}[theorem]{Proposition}
\theoremstyle{definition}
\newtheorem{remark}[theorem]{Remark}
\numberwithin{equation}{section}
\begin{document}

\title[Optimizing the ground of a Robin Laplacian: asymptotic behavior]{Optimizing the ground of a Robin Laplacian:\\ asymptotic behavior }

\author{Pavel Exner}
\address[Pavel Exner]{Doppler Institute for Mathematical Physics and Applied Mathematics, Prague  Czechia, \and \\ Department of Theoretical Physics, Nuclear Physics Institute, Czech Academy of Sciences, 
\v Re\v z near Prague, Czechia}
\email{exner@ujf.cas.cz}

\author {Hynek Kova\v{r}\'{\i}k}
\address [Hynek Kova\v{r}\'{\i}k]{DICATAM, Sezione di Matematica, Universit\`a degli studi di Brescia, Italy}
\email {hynek.kovarik@unibs.it}

\thanks{}

\begin{abstract}
In this note we consider achieving the largest principle eigenvalue of a Robin Laplacian on a bounded domain $\Omega$  by optimizing the Robin parameter function under an integral  constraint.
The main novelty of our approach lies in establishing  a close relation between the problem under consideration and the asymptotic behavior of the Dirichlet heat content of $\Omega$.  By using this relation we deduce a two-term
asymptotic expansion of the principle eigenvalue and discuss several applications. 
\end{abstract}


\maketitle

{\bf  AMS 2000 Mathematics Subject Classification:} 49R05, 35P30, 58C40\\

{\bf  Keywords:} Robin boundary conditions, eigenvalue asymptotics, heat content \\


\section{\bf Introduction}

\noindent
Let $\Omega\subset\R^N$ be an open bounded connected  smooth domain.  Given a function $\sigma \in L^1(\partial\Omega)$ we consider the functional $Q[u, \sigma]$ on $H^1(\Omega)$ defined by
\begin{equation} \label{form}
Q[u, \sigma]= \frac{\int_\Omega |\nabla u|^2 \,dx + \int_{\partial\Omega} \sigma\, |u|^2\, d\nu}{\| u\|^2_{L^2(\Omega)} } 
\end{equation}
if the right hand side is finite and by $Q[u, \sigma] = \pm \infty$ otherwise. Here
 $d\nu$ denotes the surface measure on $\partial\Omega$. Let 
\begin{equation} \label{lambda}
\lambda(\sigma) = \inf_{ \substack{u \in H^1(\Omega) \\ u \neq 0}} Q[u, \sigma]. 
\end{equation} 
By standard variational arguments it follows that  $\lambda(\sigma)$ is the principal eigenvalue of the Laplace operator in $L^2(\Omega)$ subject to  Robin boundary conditions formally given by
$$
\partial_n u + \sigma u =0 \quad \text{on \ } \partial\Omega,
$$
where $\partial_n$ denotes the outer normal derivative. 
For a given $\mu\in\R$ we define
\begin{equation} \label{sigma-mu}
\Sigma_\mu := \big\{ \sigma \in L^1(\partial\Omega): \ \int_{\partial\Omega} \sigma = \mu \big\}.
\end{equation}

The main object of our interest is the quantity
\begin{equation} \label{supinf}
\Lambda_\mu = \sup_{\sigma\in \Sigma_\mu} \lambda(\sigma),
\end{equation} 
and its asymptotic behavior for $\mu\to -\infty$.

The problem of analyzing the asymptotic behavior of $\lambda(\sigma)$ for a  $\sigma$ {\em constant} on $\partial\Omega$ and negatively diverging has attracted a huge interest in the literature. 
Particular attention has been paid to the link between the geometry of $\partial\Omega$ and the coefficients of the associated  asymptotic expansion.
In \cite{luzhu} it was proved that for  $C^1$ domains one has  $\lambda(\sigma)=-\sigma^2+o(\sigma^2)$. 
A two-term  expansion
\begin{equation} \label{sigma-const-lin}
\lambda(\sigma)=-\sigma^2 +\sigma (N-1)H_{\max}+o(\sigma) \qquad  \sigma\to -\infty,
\end{equation}
with $H_{\max}$  denoting the maximal mean curvature of $\partial\Omega$, 
was obtained, under additional regularity assumptions on $\Omega$, first for $N=2$ in \cite{p13,emp}, and later in \cite{pp15,pp15b} in any dimension.
Under further geometric hypotheses on $\Omega$, higher terms in the expansion \eqref{sigma-const-lin} were found in \cite{dk,HK,pp15b}. Analogous problem on non-smooth domains, in particular on domains with corners and cusps, were studied as well, cf.~\cite{lp,bp, kh,kop,kop2,v24, p24}. Various estimates on $\lambda(\sigma)$ were obtained in \cite{bfnt, kov, fk}. Let us also mention that a modified version of \eqref{sigma-const-lin} for the $p-$Laplace operator with Robin boundary conditions was established in \cite{kop1}, see also \cite{p20}.

The question that we address here is different: What happens when $\lambda(\sigma)$ with a constant $\sigma$ is replaced by \eqref{supinf}, in other words when we maximize $\lambda(\sigma)$ under 
the natural constraint \eqref{sigma-mu}? Our main result, Theorem \ref{thm-sup}, provides a complete answer to this question and shows that \eqref{supinf} has a unique maximizer for any $\mu\in\R$. Moreover, we derive an explicit 
expression for $\Lambda_\mu$ and for the (unique) optimizing $\sigma_\mu$ in terms of the Green's function of the Dirichlet Laplacian on $\Omega$, see equations \eqref{us} and \eqref{max}. 
This is achieved by 
a suitable modification of the construction  previously used only for $\mu >0$ in \cite{kov}, see also \cite{cu, dgk}.

Furthermore, we deduce from Theorem \ref{thm-sup} that, as $\mu\to-\infty$, 
\begin{equation} \label{Lambda-limit}
\Lambda_\mu = -\frac{\mu^2}{|\partial\Omega|^2}  + \mu\  \frac{N-1}{|\partial\Omega|^2}\, \int_{\partial\Omega} H\,  d\nu +  \mathcal{O}(1)
\end{equation}
where $H$ denotes the mean curvature of the boundary, see Corollary \ref{thm-smooth}.

Let us put our result into perspective and compare it with the results obtained previously for constant $\sigma$ cited above. Note that the first term on the right hand side of \eqref{Lambda-limit} is given by the square of the mean value of $\sigma$. This is compatible with the first term on the right hand side of \eqref{sigma-const-lin}.  
On the other hand, the coefficient of the second term in \eqref{Lambda-limit} is proportional to the mean value of $H$, whereas in \eqref{sigma-const-lin} we have the maximum of $H$. The reason for this difference lies in the different behavior of the associated minimizers. While the minimizer of \eqref{lambda} with  constant $\sigma$ concentrates, as $\sigma \to -\infty$,
near the set at which the mean curvature attains its maximum, see \cite[Thm.~6.5]{kop1}, the minimizer of \eqref{lambda} with $\sigma=\sigma_\mu$ is {\em constant on $\partial\Omega$}. The latter fact is the 
starting point of the proof of our main result. Indeed, it turns out that a sufficient condition for $\sigma$ to be an optimizer in \eqref{supinf} is that the corresponding minimizer of \eqref{lambda} is constant at the boundary, see equation \eqref{test-f}.

We therefore construct a positive solution of the eigenvalue equation in such a way that its restriction to the boundary of $\Omega$ is equal to $1$,  cf.~\eqref{minimizer}. In Theorem \ref{thm-sup} we then show that this solution is the unique minimizer of \eqref{lambda} associated to the optimizing function $\sigma_\mu$, and find an explicit equation for $\Lambda_\mu$. By exploiting the connection with the semi-classical behavior of the heat content of $\Omega$ we then derive the asymptotical expansion \eqref{Lambda-limit}, see Corollary \ref{thm-smooth}.  Several examples are discussed in Remark \ref{rems}. In the closing Section \ref{sec-related} we present some further
related results.
 
 \begin{remark}
If $\partial\Omega$ is $C^1$ smooth, then  a trace operator is well defined on $H^1(\Omega)$. More precisely, we have
\begin{equation} \label{trace-imbed}
\| u\|_{L^q(\partial\Omega)} \, \leq\, C \, \| u\|_{H^1(\Omega)}, \qquad \forall\ q \, \left\{
\begin{array}{l@{\qquad}l}
\leq \, 2(N-1)/(N-2)    &       {\rm if \ } \   N>2,  \\
< +\infty    &    {\rm if \ } \  N=2
\end{array}
\right. 
\end{equation}
with a compact imbedding. 
If  $\sigma \in L^p(\partial\Omega)$ with  $p \geq \frac{2(N-1)}{N}$ for $N\geq 3$ and $p>1$ for $N=2$, then the imbedding \eqref{trace-imbed} ensures that $Q[u, \sigma]$ is bounded from for any $\sigma\in\Sigma_\mu$ and any $\mu\in\R$. 
\end{remark}


\section{\bf Main results}

In this section we assume throughout that $\Omega$ is a bounded and connected set in $\R^N$ with a $C^3$ boundary.

Before we state our main results let us introduce some necessary notation. We denote by $-\Delta^D_\Omega$ the Laplace operator on $\Omega$ with homogenous Dirichlet boundary conditions. The volume and the surface measure of $\Omega$ are denoted by  $|\Omega|$ and $|\partial\Omega|$ respectively. The surface measure of the $N-$dimensional unit sphere will be denoted by  $|\Sph_N| $. Finally,  we denote by $B(x,r)$ the open ball of radius $r$ centered in $x\in \R^N$.

\subsection{The supremum}Let $E_1 < E_2\leq \dots E_j \leq \dots$ and $\varphi_j$ be the eigenvalues and normalized eigenfunctions of  $-\Delta^D_\Omega$ and recall that $E_1>0$. 
We need a series of auxiliary results. Let us start by 
recalling the existence of a minimizer in \eqref{form}. 

\begin{lemma} \label{lem-minimiser}
Let $\sigma \in \Sigma_\mu$ and suppose that $\sigma \in L^p(\partial\Omega)$ with  $p \geq \frac{2(N-1)}{N}$ for $N\geq 3$ and $p>1$ for $N=2$.
Then the functional $Q[\sigma, \cdot\, ]$ admits a positive minimiser $\psi\in H^1(\Omega)$ which satisfies
\begin{equation} \label{euler-lagr}
-\Delta  \psi = \lambda(\sigma)\, \psi \quad \text{in \ } \Omega, \qquad \partial_n \psi + \sigma\, \psi =0 \quad \text{on \ } \partial\Omega.
\end{equation}
\end{lemma}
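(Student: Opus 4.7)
The plan is to apply the direct method of the calculus of variations to the quadratic form in the numerator of $Q[u,\sigma]$ under the constraint $\|u\|_{L^2(\Omega)}=1$.

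First, I would establish that $Q[\,\cdot\,,\sigma]$ is bounded below on $H^1(\Omega)$. Using Hölder's inequality to separate $\sigma$ from $|u|^2$, combined with the trace imbedding \eqref{trace-imbed} and a Young-type interpolation, one derives an estimate of the form
\begin{equation*}
\Bigl|\int_{\partial\Omega}\sigma\,|u|^2\, d\nu\Bigr| \leq \varepsilon \|\nabla u\|_{L^2(\Omega)}^2 + C_\varepsilon \|u\|_{L^2(\Omega)}^2
\end{equation*}
for every $\varepsilon>0$. The hypothesis $p\geq 2(N-1)/N$ (respectively $p>1$ if $N=2$) is precisely what places the conjugate exponent of $\|\sigma\|_{L^p(\partial\Omega)}$ inside the admissible range of \eqref{trace-imbed}. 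In particular $\lambda(\sigma)>-\infty$, and any minimising sequence normalised in $L^2(\Omega)$ is automatically bounded in $H^1(\Omega)$.

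Next, I would fix $\{u_n\}\subset H^1(\Omega)$ with $\|u_n\|_{L^2(\Omega)}=1$ and $Q[u_n,\sigma]\to \lambda(\sigma)$. By the previous step and weak compactness, a subsequence satisfies $u_n\rightharpoonup \psi$ in $H^1(\Omega)$; Rellich--Kondrachov yields $u_n\to \psi$ strongly in $L^2(\Omega)$, and the compactness asserted in \eqref{trace-imbed} gives strong convergence of the traces in $L^{q}(\partial\Omega)$ for the dual exponent $q$ that appeared in the Hölder bound above. An application of Hölder then provides $\int_{\partial\Omega}\sigma|u_n|^2\,d\nu\to \int_{\partial\Omega}\sigma|\psi|^2\,d\nu$, while the weak lower semicontinuity of the Dirichlet energy gives $\int_\Omega|\nabla\psi|^2\,dx\leq \liminf \int_\Omega|\nabla u_n|^2\,dx$. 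Combined with $\|\psi\|_{L^2(\Omega)}=1$, this yields $Q[\psi,\sigma]\leq \lambda(\sigma)$, hence $\psi$ is a minimiser. Replacing $\psi$ by $|\psi|$ (which preserves the $L^2$, $H^1$ and boundary norms) one may assume $\psi\geq 0$.

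The Euler--Lagrange equation \eqref{euler-lagr} then arises as the first variation $\frac{d}{dt}Q[\psi+t\varphi,\sigma]\big|_{t=0}=0$ for arbitrary $\varphi\in H^1(\Omega)$, read as the weak formulation of the Robin boundary-value problem. Finally, $\psi>0$ in $\Omega$ follows from the strong maximum principle applied to $-\Delta\psi=\lambda(\sigma)\psi$ with $\psi\geq 0$ and $\psi\not\equiv 0$. I expect the main technical obstacle to be the first step: one must arrange the Hölder/trace estimate so that the boundary term is not merely bounded by the $H^1$-norm but is actually \emph{compact} with respect to weak $H^1$-convergence, since compactness is what prevents loss of mass at the boundary when passing to the weak limit in the minimising sequence; the condition on $p$ in the statement is calibrated exactly for this purpose.
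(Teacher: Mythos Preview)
The paper does not actually prove this lemma; immediately after the statement it writes ``For the proof of Lemma~\ref{lem-minimiser} we refer to \cite[Prop.~6.1]{kop1}.'' Your proposal therefore supplies what the paper omits: a self-contained direct-method argument (relative form-boundedness of the boundary term, extraction of a weakly convergent minimising subsequence, compactness of the trace to pass to the limit in the boundary integral, weak lower semicontinuity of the Dirichlet energy, then passage to $|\psi|$ and the strong maximum principle). This is the standard route for Rayleigh quotients with a relatively compact boundary perturbation and is presumably what the cited reference contains, so there is no genuine methodological difference to compare---you are filling in a citation rather than offering an alternative argument.

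One point to watch: check the exponent arithmetic in your first step. A straight H\"older bound
\[
\int_{\partial\Omega}|\sigma|\,|u|^2\,d\nu \le \|\sigma\|_{L^p(\partial\Omega)}\,\|u\|_{L^{2p'}(\partial\Omega)}^2
\]
combined with \eqref{trace-imbed} requires $2p'\le 2(N-1)/(N-2)$, i.e.\ $p\ge N-1$, which is \emph{stronger} than the hypothesis $p\ge 2(N-1)/N$ in the statement. Your sketch as written therefore only covers the smaller range; reaching the stated threshold (if it is indeed sharp) needs a finer interpolation or trace estimate than the one you outline. This is a detail to reconcile with the cited reference rather than a flaw in the overall strategy.
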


For the proof of Lemma \ref{lem-minimiser} we refer to \cite[Prop.~6.1]{kop1}. 

As already mentioned in the introduction, we will construct the minimizer in\eqref{euler-lagr} relative to the optimizing $\sigma$ explicitly, see \eqref{minimizer}. To do so, we  define,
for any $s \in (0,E_1)$, 
\begin{equation} \label{us}
U_s = (-\Delta^D_\Omega-s)^{-1}\,\id,
\end{equation}
where $\id$ denotes the function identically equal to $1$ on $\Omega$. In view of the regularity of $\Omega$ we have $U_s\in H^1_0(\Omega)\cap H^2(\Omega)$. Later we will show that $U_s\in L^\infty(\Omega)$, see Lemma \ref{lem-U_s} below. Moreover, $U_s >0$ in $\Omega$ since $(-\Delta^D_\Omega-s)^{-1}$ is positivity preserving. 
Together with $U_s$ we introduce the function $F:(-\infty, E_1) \to \R$ given by
\begin{equation} \label{eq-f}
F(s) = s^2 \int_\Omega U_s(x) \, dx + s\, |\Omega|.
\end{equation}  

Next we state a couple of properties of the function $F$ will be fundamental for the proof of the main result. 

\begin{lemma} \label{lem-f}
 $F$ is differentiable and strictly increasing. 
\end{lemma}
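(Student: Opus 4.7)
The plan is to derive an explicit spectral representation of $F(s)$ by expanding the constant function $\id$ in the basis $\{\varphi_j\}$ of Dirichlet eigenfunctions. Setting $a_j := \int_\Omega \varphi_j\, dx$, Parseval's identity gives $\id = \sum_j a_j \varphi_j$ in $L^2(\Omega)$ with $\sum_j a_j^2 = |\Omega|$; moreover $a_1>0$ because $\varphi_1$ is strictly positive in $\Omega$. For $s<E_1$ the definition of $U_s$ then yields
$$
U_s \;=\; \sum_{j\geq 1} \frac{a_j}{E_j-s}\, \varphi_j,
$$
so that $\int_\Omega U_s\, dx = \sum_j a_j^2/(E_j-s)$. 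Substituting into \eqref{eq-f} and combining the two terms by placing them over a common denominator gives the clean representation
$$
F(s) \;=\; \sum_{j\geq 1} a_j^2\, \frac{s\, E_j}{E_j-s}.
$$

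With $F$ in this form, both claims follow from termwise differentiation. On any compact subinterval $K\subset (-\infty,E_1)$ the factors $E_j-s$ are bounded below by a positive constant independent of $j$ and $s$, and the functions $sE_j/(E_j-s)$ and $E_j^2/(E_j-s)^2$ are uniformly bounded in $j$ (they converge to $-s$ and $1$ respectively as $j\to\infty$). Since $\sum_j a_j^2 = |\Omega|<\infty$, the series defining $F$ and the formally differentiated series $\sum_j a_j^2 E_j^2/(E_j-s)^2$ both converge absolutely and uniformly on $K$. This justifies termwise differentiation and produces
$$
F'(s) \;=\; \sum_{j\geq 1} \frac{a_j^2\, E_j^2}{(E_j-s)^2}.
$$
Every summand is nonnegative, and the $j=1$ term is strictly positive because $a_1>0$ and $E_1>0$; hence $F'(s)>0$ for all $s\in(-\infty, E_1)$, giving both differentiability and strict monotonicity.

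The only real subtlety is the legitimacy of termwise differentiation, but the uniform bounds on $K$ above reduce it to a routine dominated convergence argument, so I do not expect a substantive obstacle. If one wishes to avoid series entirely, one can instead differentiate the identity $(-\Delta^D_\Omega - s) U_s = \id$ in $s$ to obtain $\partial_s U_s = (-\Delta^D_\Omega - s)^{-1} U_s$; taking the $L^2$-pairing with $\id$ yields $\int_\Omega \partial_s U_s\, dx = \|U_s\|_{L^2(\Omega)}^2$, and a short algebraic manipulation (essentially completing the square as in the spectral computation above) gives the same positive expression for $F'(s)$.
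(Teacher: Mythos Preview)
Your proof is correct. Both you and the paper arrive at the same positive expression for $F'(s)$, but by different routes: the paper appeals to analyticity of the resolvent $(-\Delta^D_\Omega - s)^{-1}$ to get smoothness of $g(s)=\int_\Omega U_s$, quotes a lemma from \cite{kov} for $g'(s)=\|U_s\|_{L^2}^2$, and then completes the square to write $F'(s)=\int_\Omega (1+sU_s)^2\,dx$. You instead expand $\id$ in the Dirichlet eigenbasis, obtain the closed form $F(s)=\sum_j a_j^2\, sE_j/(E_j-s)$, and differentiate termwise to get $F'(s)=\sum_j a_j^2 E_j^2/(E_j-s)^2$; these two formulas are of course the same by Parseval. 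Your approach is more self-contained (no external citation, no operator-theoretic input beyond the spectral theorem) and makes the strict positivity transparent via $a_1\neq 0$, at the cost of a short uniform-convergence check; the paper's approach is shorter and yields the pleasant integral identity $F'(s)=\|1+sU_s\|_{L^2}^2$ directly. The alternative you sketch at the end---differentiating $(-\Delta^D_\Omega - s)U_s=\id$ and pairing with $\id$---is in fact exactly what the paper does.
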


\begin{proof}
Following \cite{kov} we let 
$$
g(s)= \int_\Omega U_s(x)\, dx = (\id,\,  (-\Delta^D_\Omega-s)^{-1} \id )_{L^2(\Omega)},
$$ 
so that $F(s) = s^2 g(s)+s\, |\Omega|$. Since the resolvent $(-\Delta^D_\Omega-s)^{-1}$ is analytic in $s$ on $(-\infty,E_1)$, it follows that $g \in C^\infty(-\infty,E_1)$. Hence $F\in C^\infty(-\infty,E_1)$. 
Moreover, by \cite[Lem.~3.2]{kov}, we have 
$$
g'(s) = \|U_s\|^2_{L^2(\Omega)}.
$$
This implies
\begin{equation} 
F'(s) = 2s g(s) +s^2 \|U_s\|^2_{L^2(\Omega)} +|\Omega| = \int_{\Omega} (1+sU_s)^2\, dx >0,
\end{equation}
as claimed.
\end{proof}

\begin{lemma} \label{lem-f-2}
We have
\begin{equation} \label{lim-F}
\lim_{s\to -\infty} F(s) = -\infty.
\end{equation}
\end{lemma}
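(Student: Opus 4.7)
The plan is to use the spectral decomposition of $-\Delta^D_\Omega$ to rewrite $F(s)$ as an explicit series in the Dirichlet eigenvalues $E_j$ and eigenfunctions $\varphi_j$, and then conclude via monotone convergence. Setting $s=-t$ with $t>0$ and $c_j:=(\id,\varphi_j)_{L^2(\Omega)}$, the definition \eqref{us} of $U_s$ gives
\begin{equation*}
U_{-t}=\sum_{j\geq 1}\frac{c_j}{E_j+t}\,\varphi_j \quad\text{in } L^2(\Omega), \qquad \int_\Omega U_{-t}(x)\,dx=\sum_{j\geq 1}\frac{c_j^2}{E_j+t},
\end{equation*}
where the second identity uses $(\id,U_{-t})_{L^2(\Omega)}=\int_\Omega U_{-t}\,dx$ together with Parseval's identity. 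Combining this with $|\Omega|=\|\id\|_{L^2(\Omega)}^2=\sum_j c_j^2$, a direct algebraic manipulation then yields
\begin{equation*}
F(-t)=t^2\int_\Omega U_{-t}\,dx-t\,|\Omega|=-t\sum_{j\geq 1} c_j^2\!\left(1-\frac{t}{E_j+t}\right)=-\sum_{j\geq 1}\frac{c_j^2\,E_j\,t}{E_j+t}.
\end{equation*}

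With $F(-t)$ written in this form, I would next observe that for every fixed $j$ the map $t\mapsto E_j t/(E_j+t)$ is nonnegative and strictly increasing in $t$, with limit $E_j$ as $t\to\infty$. Since $c_j^2\geq 0$, the monotone convergence theorem applied to counting measure on $\N$ yields
\begin{equation*}
\lim_{t\to+\infty}\sum_{j\geq 1}\frac{c_j^2\,E_j\,t}{E_j+t}=\sum_{j\geq 1} c_j^2\, E_j.
\end{equation*}
The right-hand side is the quadratic form associated with $-\Delta^D_\Omega$ evaluated on $\id$, so it is finite and equal to $\|\nabla\id\|_{L^2(\Omega)}^2=0$ if and only if $\id\in H^1_0(\Omega)$. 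But the trace of $\id$ on $\partial\Omega$ is identically $1$, so $\id\notin H^1_0(\Omega)$, and therefore $\sum_j c_j^2 E_j=+\infty$. This forces $F(-t)\to-\infty$, which is \eqref{lim-F}.

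The main obstacle I anticipate is essentially bookkeeping: one has to ensure that the $L^2$-spectral identity produces genuine numerical equalities for $\int_\Omega U_{-t}\,dx$ and for $|\Omega|$, and that the subsequent interchange of limit and sum is legal. Both points are standard (Parseval and monotone convergence, respectively), and no PDE-regularity complication arises, since $\id\in L^2(\Omega)$ and $(-\Delta^D_\Omega-s)^{-1}$ is a bounded operator on $L^2(\Omega)$ for every $s<E_1$.
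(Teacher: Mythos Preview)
Your argument is correct and complete. The spectral identity $F(-t)=-\sum_j c_j^2 E_j t/(E_j+t)$ is verified by the algebra you wrote, the monotone convergence step is legitimate, and the key point---that $\sum_j c_j^2 E_j=+\infty$ because $\id\notin H^1_0(\Omega)$ is not in the form domain of $-\Delta^D_\Omega$---is exactly right for a bounded domain with a well-defined trace.

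Your route is genuinely different from the paper's. The paper writes $\int_\Omega U_s\,dx$ as a Laplace transform of the Dirichlet heat content $Q_\Omega(t)$ and then invokes the small-time expansion $Q_\Omega(t)=|\Omega|-2|\partial\Omega|\sqrt{t/\pi}+O(t)$ from \cite{vdbd}; this yields not just the limit $-\infty$ but the quantitative asymptotics $F(s)=-\sqrt{-s}\,|\partial\Omega|+O(1)$, which are later upgraded and used in the proof of Corollary~\ref{thm-smooth}. Your spectral/monotone-convergence argument is more elementary and fully self-contained---no appeal to heat-content results---but it gives only the bare divergence, with no rate. So each approach has its advantage: yours is cleaner for the lemma in isolation, while the paper's method is doing double duty by setting up the machinery needed for the main asymptotic expansion.
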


\begin{proof}
Let $e^{t \Delta_\Omega^D}(x,y), t>0,$ denote the integral kernel of the heat semi-group generated 
by $-\Delta_\Omega^D$. Observe that, for any $s\leq 0$,
\begin{equation} \label{g-Q}
 \int_\Omega U_s(x)\, dx = \int_0^\infty e^{s t}\,  Q_\Omega(t)\, dt,  
\end{equation}
where
\begin{equation} 
Q_\Omega(t) =\iint_{\Omega\times\Omega} e^{t \Delta_\Omega^D}(x,y)\, dx dy
\end{equation}
is the heat content of $\Omega$ corresponding to initial homogeneous temperature $u =1$. By \cite[Thm.~6.2]{vdbd}, 
\begin{equation} \label{heat-content-asymp}
\Big | Q_\Omega(t) - |\Omega| + \frac{2 |\partial\Omega|}{\sqrt{\pi}}\, t^{\frac12} \Big | \leq C\, t  \qquad \forall\, t >0,
\end{equation}
with a constant $C$ independent of $t$. 
Inserting this estimate into \eqref{g-Q} and using the identity 
$$
\int_0^\infty \sqrt{z}\  e^{-z}\, dz = \Gamma(3/2) = \frac{\sqrt{\pi}}{2},
$$
we get 
\begin{equation} \label{g-asymp}
 \int_\Omega U_s(x)\, dx  = -\frac{|\Omega|}{s}+ \frac{ |\partial\Omega|}{s \sqrt{|s|}}\,  +\mathcal{O}(s^{-2}) \qquad s\to -\infty.
\end{equation}
Now the claim follows from \eqref{eq-f}. 
\end{proof}

Lemma \ref{lem-f} implies that for every $\mu \in \R$ there exists a unique $s(\mu)$ such that
\begin{equation} \label{implicit}
F(s(\mu))  = \mu. 
\end{equation}
As we will prove shortly, it turns out that $s(\mu)$ given implicitly by \eqref{implicit} coincides with $\Lambda_\mu$, and that $u_\mu$ defined in \eqref{minimizer}  below is its corresponding eigenfunction.
It remains to show, however, that the latter is positive.  To this end we have to ensure that $u_\mu$  is positive in $\Omega$. This is done in the following

\begin{lemma} \label{lem-U_s}
For every $s\leq 0$ we have 
\begin{equation} \label{upperb-U_s}
\sup_{x\in\Omega} |s|\, U_s(x) < 1.
\end{equation}
\end{lemma}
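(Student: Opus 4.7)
My plan is to reformulate the bound in terms of an auxiliary function. For $s \le 0$ define $v := 1 + sU_s$; then the inequality $|s|\,U_s(x) < 1$ is precisely $v(x) > 0$. The case $s=0$ is trivial, so assume $s<0$. Because $\partial\Omega$ is $C^3$ and the source $\id$ is smooth, elliptic regularity places $U_s$ in $C(\overline{\Omega})$, hence so does $v$, and the Dirichlet condition $U_s=0$ on $\partial\Omega$ gives $v\equiv 1$ there. From the defining equation $(-\Delta_\Omega^D - s)U_s = \id$ one reads off $-\Delta U_s = 1 + sU_s = v$, and consequently
\begin{equation*}
-\Delta v = -s\, \Delta U_s = s\, v \quad \text{in } \Omega, \qquad v = 1 \quad \text{on } \partial\Omega.
\end{equation*}

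The next step would be to invoke the strong maximum principle for the coercive operator $L := -\Delta - s$, whose zeroth-order coefficient $-s$ is strictly positive. Since $L(-v) = 0$ in $\Omega$ and $-v = -1$ on $\partial\Omega$, if $-v$ attained a nonnegative maximum at an interior point the strong maximum principle would force $-v$ to be constant, contradicting its boundary value. Hence $v > 0$ on all of $\overline{\Omega}$, and by compactness $m := \min_{\overline{\Omega}} v > 0$. Since $|s|\,U_s = 1 - v$ for $s \le 0$, this yields
\begin{equation*}
\sup_{x\in\Omega} |s|\, U_s(x) \;\le\; 1 - m \;<\; 1,
\end{equation*}
which is the claimed strict inequality.

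The only real input beyond bookkeeping is the strong maximum principle for a coercive Helmholtz-type operator; Hopf's boundary point lemma would give the same conclusion with essentially the same work. The main potential obstacle is making sure $U_s$ is continuous up to the boundary so that the minimum of $v$ is actually attained and strictly positive, but this is standard elliptic regularity under the $C^3$ hypothesis on $\partial\Omega$.
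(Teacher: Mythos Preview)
Your argument is correct and takes a genuinely different route from the paper. The paper writes $s=-\kappa^2$, invokes Dirichlet domain monotonicity to bound $(-\Delta_\Omega^D+\kappa^2)^{-1}(x,y)$ by the free resolvent kernel on $\R^N$, and then computes $\kappa^2\int_{\R^N}(-\Delta_{\R^N}+\kappa^2)^{-1}(x,y)\,dy=1$ explicitly via the Bessel-function representation of the free Green function. Your approach bypasses all of this: you recognize that $v=1+sU_s$ solves $(-\Delta+|s|)v=0$ with $v=1$ on $\partial\Omega$, and the strong maximum principle for an operator with strictly positive zeroth-order term immediately forces $v>0$ on $\overline\Omega$, hence $\sup|s|U_s=1-\min v<1$. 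This is more elementary and self-contained, needing only the maximum principle rather than special-function identities; the paper's route, on the other hand, yields an explicit pointwise majorant (the free resolvent) that could in principle be exploited for sharper quantitative bounds. Your regularity remark is fine: with $\partial\Omega\in C^3$ (indeed $C^2$ suffices), $L^p$ elliptic theory puts $U_s\in W^{2,p}(\Omega)$ for all $p$, hence $U_s\in C(\overline\Omega)$, so the minimum of $v$ is attained and strictly positive.
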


\begin{proof} Let $s= -\kappa^2$. We
extend $(-\Delta^D_\Omega+\kappa^2)^{-1}(x,y)$ by zero to $\R^N\setminus\Omega$, and we deduce from Dirichlet monotonicity, see \cite{vdbd, gz}, 
that
\begin{equation} 
 (-\Delta^D_\Omega+\kappa^2)^{-1}(x,y) <  (-\Delta_{\R^N} +\kappa^2)^{-1}(x,y) \qquad  x,y \in \R^N.
\end{equation}
Here $\Delta_{\R^N}$ denotes the Laplace operator on $\R^N$. The Green function of the latter is given by 
\begin{equation} 
 (-\Delta_{\R^N} +\kappa^2)^{-1}(x,y) = \frac{1}{2\pi} \left(\frac{\kappa}{2\pi |x-y|} \right)^\nu\, K_\nu(\kappa|x-y|), 
\end{equation}
where $\nu = \frac N2-1$, and where $K_\nu$ denotes the modified Bessel function of the second kind, cf.~\cite[Chap.~9.6]{as}. 
From \cite[Eq.10.43.19]{dlmf} we learn that 
$$
\int_0^\infty t^{\nu +1} \, K_\nu(t) \, dt=  2^{\nu}\,  \Gamma(\nu +1) = 2^{\frac N2-1}\, \Gamma(N/2).
$$
Hence for any $\kappa\geq 0$ and $x\in\Omega$ we have
\begin{align*}
\kappa^2\, U_{\kappa^2}(x) & = \kappa^2\int_{\Omega}  (-\Delta^D_\Omega+\kappa^2)^{-1}(x,y) \, dy < \kappa^2\int_{\R^N}  (-\Delta_{\R^N}+\kappa^2)^{-1}(x,y) \, dy \\[5pt]
& =  \kappa^2\,  \frac{|\Sph_N|}{2\pi} \left(\frac{\kappa}{2\pi} \right)^\nu\,  \int_0^\infty r^{N/2} \, K_\nu(\kappa r)\, dr  =  \frac{|\Sph_N|}{(2\pi)^{N/2}} \int_0^\infty t^{\nu +1} \, K_\nu(t) \, dt=1,
\end{align*}
where, in the last step, we have used the identity
$$
|\Sph_N| = \frac{2\, \pi^{N/2}}{\Gamma(N/2)}\, .
$$
\end{proof}

\begin{theorem} \label{thm-sup}
Let $\Omega$ be as above. Then \eqref{supinf} is attained for any $ t\in \R$ and satisfies  
\begin{equation} \label{max}
\Lambda_\mu =   \lambda(\sigma_\mu) = s(\mu),  \quad \text{\rm where \ \ } \sigma_\mu = -s(\mu)\, \partial_n U_{s(\mu)}\big |_{\partial\Omega},
\end{equation}
and $s(\mu)$ is given by \eqref{implicit}.
Moreover, the optimizer $\sigma_\mu$ is unique in $\Sigma_\mu$.   
\end{theorem}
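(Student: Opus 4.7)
My plan is to elevate the heuristic stated in the introduction---that $\sigma$ is optimal precisely when the corresponding ground state of the Robin Laplacian is constant on $\partial\Omega$---into a rigorous argument by constructing such a ground state explicitly from the resolvent of $-\Delta^D_\Omega$ and then using its constant boundary trace as a universal test-function device.

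\emph{Inverting $F$ and defining $u_\mu$.} Lemma \ref{lem-f} and Lemma \ref{lem-f-2} give strict monotonicity of $F$ on $(-\infty,E_1)$ and the limit $F(s)\to-\infty$ as $s\to-\infty$. For the right endpoint I would use the eigenfunction expansion $g(s)=\sum_{j\ge 1}(E_j-s)^{-1}\bigl|\int_\Omega\varphi_j\,dx\bigr|^2$; since the Dirichlet ground state satisfies $\int_\Omega\varphi_1\,dx>0$, this gives $F(s)\to+\infty$ as $s\uparrow E_1$, so \eqref{implicit} has a unique solution $s(\mu)\in(-\infty,E_1)$ for every $\mu\in\R$. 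Then set $u_\mu:=\id+s(\mu)\,U_{s(\mu)}$. Since $(-\Delta^D_\Omega-s)U_s=\id$, one checks $-\Delta u_\mu=s(\mu)\,u_\mu$ in $\Omega$, while $U_s\in H^1_0(\Omega)$ yields $u_\mu\equiv 1$ on $\partial\Omega$; positivity of $u_\mu$ in $\Omega$ is trivial when $s(\mu)\ge 0$ and is precisely Lemma \ref{lem-U_s} when $s(\mu)<0$. The Robin condition on $\partial\Omega$ then identifies $\sigma_\mu=-s(\mu)\,\partial_n U_{s(\mu)}\big|_{\partial\Omega}$, and a divergence-theorem computation using $-\Delta U_s=\id+sU_s$ yields
\[
\int_{\partial\Omega}\sigma_\mu\,d\nu=s(\mu)^2\int_\Omega U_{s(\mu)}\,dx+s(\mu)\,|\Omega|=F(s(\mu))=\mu,
\]
so $\sigma_\mu\in\Sigma_\mu$; moreover $\sigma_\mu\in L^2(\partial\Omega)$ because $U_{s(\mu)}\in H^2(\Omega)$, well within the integrability hypothesis of Lemma \ref{lem-minimiser}.

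\emph{The test-function mechanism.} For arbitrary $\sigma\in\Sigma_\mu$ I would evaluate the Rayleigh quotient \eqref{form} on $u_\mu$. Because $u_\mu|_{\partial\Omega}=1$ the boundary term becomes $\int_{\partial\Omega}\sigma\,|u_\mu|^2\,d\nu=\mu$ independently of $\sigma$, and Green's identity combined with $-\Delta u_\mu=s(\mu)u_\mu$ and $\partial_n u_\mu=-\sigma_\mu$ gives
\[
\int_\Omega|\nabla u_\mu|^2\,dx=s(\mu)\,\|u_\mu\|_{L^2(\Omega)}^2-\int_{\partial\Omega}\sigma_\mu\,d\nu=s(\mu)\,\|u_\mu\|_{L^2(\Omega)}^2-\mu,
\]
so $Q[u_\mu,\sigma]=s(\mu)$ \emph{uniformly} in $\sigma\in\Sigma_\mu$. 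Consequently $\lambda(\sigma)\le s(\mu)$ for every $\sigma\in\Sigma_\mu$, hence $\Lambda_\mu\le s(\mu)$. For attainment I note that with $\sigma=\sigma_\mu$ the positive function $u_\mu$ solves \eqref{euler-lagr} with eigenvalue $s(\mu)$, so by positivity of the principal eigenfunction $\lambda(\sigma_\mu)=s(\mu)=\Lambda_\mu$. If some other $\sigma'\in\Sigma_\mu$ also realizes $\Lambda_\mu$, then $Q[u_\mu,\sigma']=s(\mu)=\lambda(\sigma')$ shows $u_\mu$ is itself a minimizer for $Q[\,\cdot\,,\sigma']$; the Robin condition in \eqref{euler-lagr} then forces $-\partial_n u_\mu=\sigma' u_\mu=\sigma'$ on $\partial\Omega$, whence $\sigma'=\sigma_\mu$.

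\emph{Expected main obstacle.} The substantive work is concentrated in the first step: establishing that $F$ maps $(-\infty,E_1)$ onto all of $\R$ (relying on the blow-up of the Dirichlet resolvent at $s=E_1$, which is not covered by the lemmas printed above) and maintaining positivity of $u_\mu$ simultaneously for all $\mu\in\R$. Once $u_\mu$ is in hand with its constant boundary trace, the rest is essentially a single application of Green's identity, and the trick---that a boundary-constant test function collapses the boundary form to the prescribed integral $\mu$---does all the optimization work.
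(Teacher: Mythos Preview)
Your argument is correct and follows essentially the same route as the paper: construct $u_\mu=1+s(\mu)U_{s(\mu)}$ with constant boundary trace, verify $\sigma_\mu\in\Sigma_\mu$ via the divergence theorem, use $u_\mu$ as a universal test function so that $Q[u_\mu,\sigma]=s(\mu)$ for every $\sigma\in\Sigma_\mu$, and deduce attainment and uniqueness from positivity of $u_\mu$. The only notable difference is that you treat all $\mu\in\R$ uniformly by supplying the missing right-endpoint argument $F(s)\to+\infty$ as $s\uparrow E_1$, whereas the paper defers the case $\mu>0$ to \cite{kov}.
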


\begin{proof}
The case $\mu>0$ is treated in \cite[Thm.~3.3]{kov}. We therefore assume that $\mu\leq 0$. 
Let $\sigma_\mu$ be given as in \eqref{max}.  From \eqref{max} it follows that $\sigma_\mu\in L^\infty(\partial\Omega)$. Moreover, since $-\Delta U_{s(\mu)} = s(\mu) U_{s(\mu)}+1$, the divergence
theorem in combination with equations \eqref{max} and \eqref{implicit} gives
\begin{align*}
\int_{\partial\Omega} \sigma_\mu\, d\nu & = -\int_{\partial\Omega} \partial_n U_{s(\mu)}\, d\nu = -s(\mu) \int_\Omega \Delta U_{s(\mu)}\, dx = F(s(\mu)) =\mu .
\end{align*}
This shows that $\sigma_\mu\in\Sigma_\mu$.  Now let us set
\begin{equation} \label{minimizer}
u_\mu := s(\mu)\, U_{s(\mu)} + 1,
\end{equation}
so that
\begin{equation} \label{eq-ut}
-\Delta u_\mu = s(\mu)\, u_\mu \quad \text{in \ } \Omega, \qquad \partial_n u_\mu + \sigma_\mu =0 \quad \text{on \ } \partial\Omega.
\end{equation}
Moreover, by Lemma  \ref{lem-U_s} we have $u_\mu>0$ in $\Omega$.
We claim that $u_\mu$ is a minimiser of $Q[\sigma_\mu, \cdot\, ]$. Indeed, by \eqref{eq-ut} we have $Q[\sigma_\mu, u_\mu] = s(\mu)$. Assume that $\lambda(\sigma_\mu) < s(\mu)$. In view of Lemma \ref{lem-minimiser} and the regularity of $\sigma_\mu$  there exists a positive minimiser $\psi$ of $Q[\sigma_\mu, \cdot\, ]$ which satisfies equation \eqref{euler-lagr} with $\sigma=\sigma_\mu$. Hence $(\psi, u_\mu)_{L^2(\Omega)}=0$, which is in contradiction with the positivity of $\psi$ and $u_\mu$. We thus conclude that $\lambda(\sigma_\mu) = s(\mu)$.

To show that  $\sup_{\sigma\in \Sigma_\mu} \lambda(\sigma)=   \lambda(\sigma_\mu)$ we pick an arbitrary $\sigma\in\Sigma_\mu$. Using \eqref{lambda} and the fact that $u_\mu = 1$ on $\partial\Omega$ 
we obtain
\begin{equation} \label{test-f}
\lambda(\sigma)   \leq  Q[\sigma, u_\mu] = 
Q[\sigma_\mu, u_\mu] = \lambda(\sigma_\mu).
\end{equation}
To prove the uniqueness of $\sigma_\mu$ suppose that $\lambda(\sigma)=\lambda(\sigma_\mu)$ for some $\sigma\in\Sigma_\mu$. Mimicking  the above argument  we find 
$$
\lambda(\sigma_\mu)  =\lambda(\sigma) \leq  Q[\sigma, u_\mu] = Q[\sigma_\mu, u_\mu] = \lambda(\sigma_\mu).
$$
Hence  $u_\mu$ satisfies the Euler-Lagrange equation \eqref{eq-ut} with $\sigma_\mu$ replaced with $\sigma$, and therefore $\sigma= \sigma_\mu$. 
\end{proof}

\begin{remark}
Obviously, for $\mu=0$ we have $\sigma_0=0$ and  $\Lambda_0=0$.
\end{remark}

\subsection{Asymptotic behavior  of $\Lambda_\mu$ on smooth domains}  

\begin{corollary}\label{thm-smooth}
Let  $\partial\Omega$, and let $\Lambda_\mu$ be given by \eqref{supinf} .  Then
\begin{equation} \label{Lambda-asymp}
\Lambda_\mu = -\frac{\mu^2}{|\partial\Omega|^2}  + \mu\  \frac{N-1}{|\partial\Omega|^2}\, \int_{\partial\Omega} H(y)\,  d\nu(y) +  \mathcal{O}(1) \qquad \mu\to -\infty, 
\end{equation}
\end{corollary}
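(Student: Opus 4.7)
The plan is to invoke Theorem~\ref{thm-sup}, which identifies $\Lambda_\mu$ with the implicitly-defined quantity $s(\mu)$ solving $F(s(\mu)) = \mu$. The task therefore reduces to producing a two-term expansion for the inverse of $F$ as $s \to -\infty$ (equivalently $\mu \to -\infty$) to within an $\mathcal{O}(1)$ remainder.

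First I would upgrade the expansion of $\int_\Omega U_s\, dx$ from Lemma~\ref{lem-f-2}. The bound of van den Berg--Davies used there has $\mathcal{O}(t)$ error and only produces the leading term. Under the $C^3$ regularity assumed throughout the section, the Dirichlet heat content admits the sharper expansion
\begin{equation*}
Q_\Omega(t) = |\Omega| - \frac{2|\partial\Omega|}{\sqrt{\pi}}\, t^{1/2} + \frac{N-1}{2}\left(\int_{\partial\Omega} H\, d\nu\right) t + \mathcal{O}(t^{3/2}), \qquad t \to 0^+,
\end{equation*}
which is the next-order term in the standard Savo/van den Berg--Gilkey heat-content expansion. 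Inserting this into \eqref{g-Q} and using $\int_0^\infty e^{-\kappa^2 t}\, t^\alpha\, dt = \Gamma(\alpha+1) \kappa^{-2(\alpha+1)}$ for $\alpha \in \{0, 1/2, 1\}$, I obtain
\begin{equation*}
\int_\Omega U_{-\kappa^2}(x)\, dx = \frac{|\Omega|}{\kappa^2} - \frac{|\partial\Omega|}{\kappa^3} + \frac{N-1}{2\kappa^4}\int_{\partial\Omega} H\, d\nu + \mathcal{O}(\kappa^{-5}), \qquad \kappa \to \infty,
\end{equation*}
and substituting into \eqref{eq-f} the $\kappa^2 |\Omega|$ terms cancel, leaving
\begin{equation*}
F(-\kappa^2) = -\kappa\, |\partial\Omega| + \frac{N-1}{2}\int_{\partial\Omega} H\, d\nu + \mathcal{O}(\kappa^{-1}).
\end{equation*}

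The last step is asymptotic inversion. Writing $s(\mu) = -\kappa_\mu^2$ with $\kappa_\mu > 0$, the identity $F(s(\mu)) = \mu$ gives, for $\mu \to -\infty$,
\begin{equation*}
\kappa_\mu = \frac{-\mu}{|\partial\Omega|} + \frac{N-1}{2|\partial\Omega|^2}\int_{\partial\Omega} H\, d\nu + \mathcal{O}(|\mu|^{-1});
\end{equation*}
squaring and changing sign produces exactly \eqref{Lambda-asymp}. I expect the main obstacle to lie in the refined heat-content expansion: Lemma~\ref{lem-f-2} relies on a bound whose error is too coarse, and one must either cite (or rederive in the present setting) the $t^{3/2}$-accurate expansion from the heat-content literature. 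Once that is in hand, the remaining computations are entirely routine Laplace-transform evaluations and a standard asymptotic inversion.
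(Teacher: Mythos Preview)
Your proposal is correct and follows the paper's proof essentially verbatim: the paper cites van den Berg--Le Gall for the $t^{3/2}$-accurate heat-content bound (valid uniformly for all $t>0$), Laplace-transforms it via \eqref{g-Q} to obtain the two-term expansion \eqref{F-asymp} of $F$, and then inverts using \eqref{implicit} and \eqref{max}. One small slip: the constant term in your displayed formula for $\kappa_\mu$ should carry $|\partial\Omega|^{-1}$, not $|\partial\Omega|^{-2}$; with that correction, squaring and changing sign indeed reproduces \eqref{Lambda-asymp}.
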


\begin{proof}
 In view of the regularity of $\Omega$ and  \cite[Thm.~1.1]{vdbg}, 
\begin{equation} \label{heat-content-asymp-bis}
\Big | Q_\Omega(t) - |\Omega| + \frac{2 |\partial\Omega|}{\sqrt{\pi}}\, t^{\frac12}  - \frac{(N-1)\, t}{2}  \int_{\partial\Omega} H(y)\,  d\nu(y)\, \Big |   \leq C\, t^{3/2} \qquad \forall\, t > 0.
\end{equation}
Combining this with \eqref{g-Q} and \eqref{eq-f} gives 
\begin{equation} \label{F-asymp}
F(s)  = - \sqrt{-s} \  |\partial\Omega|  +\frac{N-1}{2}\, \int_{\partial\Omega} H(y)\,    d\nu(y)  +\mathcal{O}(|s|^{-1/2}) \qquad s\to -\infty.
\end{equation}
Now the claim follows from eqautions  \eqref{implicit} and \eqref{max}.
\end{proof}

\begin{remark}\label{rems}  Let us make a couple of comments regarding Corollary \ref{thm-smooth}.
\begin{enumerate}

\item{\bf Constant $\sigma$}. Obviously, $\sigma = \frac{\mu}{|\partial\Omega|} \in \Sigma_\mu$.  Equation \eqref{Lambda-asymp} thus implies, see \eqref{supinf}, 
\begin{align}  \label{sigma-constant-asymp}
\lambda(\sigma) \leq \Lambda_\mu  = -\sigma^2  + \sigma\, \frac{N-1}{|\partial\Omega|}\, \ \int_{\partial\Omega} H(y)\,    d\nu(y) + \mathcal{O}(1) \quad  \text{as} \ \ \sigma\to  -\infty .
\end{align}
This is to be compared with the two-term asymptotic  expansion \eqref{sigma-const-lin}. 
Note that 
$$
\int_{\partial\Omega} H(y)\,    d\nu(y) \leq |\partial\Omega| \, H_{\rm max}.
$$

\item{\bf A ball}. If $\Omega$ is a ball of radius $R$, then $H(s) = H_{\rm max} =R^{-1}$ for any $s\in\partial\Omega$. 
Moreover, from \eqref{max} we deduce that  $\sigma_\mu$ is constant and therefore
$\sigma_\mu= \frac{\mu}{|\partial\Omega|}$.
Hence $\lambda(\sigma) = \Lambda_\mu $, and inequality \eqref{sigma-constant-asymp} turns into an asymptotic equation  
which coincides  with \eqref{sigma-const-lin}. (Notice that the error term in \eqref{sigma-const-lin} is of order  $\mathcal{O}(1)$ if $\Omega$ is a ball, cf.~\cite{HK}). 

\item{\bf $N$-dimensional spherical shell}. Let $\Omega$ be a spherical shell with radii $R$ and $r$. Corollary \ref{thm-smooth} then gives
\begin{equation}
\Lambda_\mu = -\frac{\mu^2}{|\partial\Omega|^2}  + \mu\  (N-1) \frac{ |\Sph_N| }{|\partial\Omega|^2}\, \big ( R^{N-2}+ r^{N-2}\big)  +\mathcal{O}(1).
\end{equation}
\end{enumerate}
\end{remark}


\section{\bf Some related results}
\label{sec-related}

\subsection{Asymptotic behavior of $\Lambda_\mu$ on polygons}  As an example of non-smooth domains with corners, we treat two-dimensional 
 polygons.
 
\begin{corollary}\label{thm-polygon}
Let $\Omega\subset \R^2$ an open, bounded and connected set with polygonal boundary. Denote by $\gamma_j, j=1,\dots,n,$ the internal angles 
of $\partial\Omega$, and let 
\begin{equation}
c(\alpha) = \int_0^\infty  \frac{4 \sinh((\pi-\alpha)x)}{\sinh(\pi x) \cosh(\alpha x)}\, dx \, .
\end{equation}
Then
\begin{equation} \label{Lambda-asymp-polygon}
\Lambda_\mu = -\frac{\mu^2}{|\partial\Omega|^2}  + \frac{2  \mu}{|\partial\Omega|^2} \,  \sum_{j=1}^n c(\gamma_j)+  \mathcal{O}(1) \qquad \mu\to -\infty.
\end{equation}
\end{corollary}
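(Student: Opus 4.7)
The plan is to follow the same pattern as the proof of Corollary \ref{thm-smooth}: combine Theorem \ref{thm-sup}, the Laplace-transform representation \eqref{g-Q} of $\int_\Omega U_s$, and a short-time asymptotic expansion of the heat content $Q_\Omega(t)$, then invert the implicit relation $F(s(\mu))=\mu$ from \eqref{implicit} and \eqref{max}. The only input that has to be changed is the short-time expansion of $Q_\Omega$, since a polygonal boundary lacks curvature in the classical sense. Here one uses the polygonal heat-content expansion due to van den Berg and Srisatkunarajah,
\begin{equation*}
Q_\Omega(t) = |\Omega| - \frac{2|\partial\Omega|}{\sqrt{\pi}}\, t^{1/2} + t \sum_{j=1}^n c(\gamma_j) + \mathcal{O}(t^{3/2}), \qquad t \to 0^+,
\end{equation*}
in which the corner coefficients are precisely the $c(\gamma_j)$ from the statement; they arise from an explicit heat-content computation on the infinite wedge of opening $\gamma_j$.

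Substituting this expansion into \eqref{g-Q}, evaluating the elementary Laplace transforms $\int_0^\infty e^{-\kappa^2 t}\, t^\alpha\, dt = \Gamma(\alpha+1)\kappa^{-2(\alpha+1)}$ with $s=-\kappa^2$, and observing that the tail $t\geq 1$ contributes only an exponentially small error (because $0\leq Q_\Omega(t)\leq |\Omega|$), one obtains
\begin{equation*}
\int_\Omega U_s(x)\, dx = -\frac{|\Omega|}{s} + \frac{|\partial\Omega|}{s\sqrt{|s|}} + \frac{1}{s^2}\sum_{j=1}^n c(\gamma_j) + \mathcal{O}(|s|^{-5/2}), \qquad s \to -\infty.
\end{equation*}
Plugging this into \eqref{eq-f} gives $F(s) = -\sqrt{-s}\,|\partial\Omega| + \sum_{j=1}^n c(\gamma_j) + \mathcal{O}(|s|^{-1/2})$. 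Setting $F(s(\mu))=\mu$, solving for $\sqrt{-s(\mu)}$ to leading order and squaring then produces \eqref{Lambda-asymp-polygon}, since $\Lambda_\mu = s(\mu)$ by \eqref{max}.

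The main obstacle is securing the polygonal heat-content expansion with the sharp $\mathcal{O}(t^{3/2})$ remainder: anything weaker (say an $o(t)$ error) would only yield an $o(|\mu|)$ term in place of the $\mathcal{O}(1)$ remainder in \eqref{Lambda-asymp-polygon}. I would simply quote this bound from the existing polygonal heat-content literature. A self-contained derivation proceeds by localization near each vertex, comparing the local heat kernel to that on the infinite wedge of the same opening (which can be written explicitly by separation of variables in polar coordinates), and using the exponential smallness of the heat kernel away from a smooth boundary piece to show that the straight edges contribute only at the $t^{1/2}$ level; the closed-form formula for $c(\alpha)$ then emerges from the wedge computation via a Mellin-transform or Sommerfeld-type representation.
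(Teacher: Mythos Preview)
Your approach is correct and essentially identical to the paper's, which also invokes the van den Berg--Srisatkunarajah polygonal heat-content expansion \cite[Thm.~1]{vdbs} and then mimics the proof of Corollary~\ref{thm-smooth}. The only minor difference is that the cited result actually gives the remainder in the form $C e^{-\delta/t}$ valid for all $t>0$, which is stronger than the $\mathcal{O}(t^{3/2})$ you assume and makes your separate treatment of the tail $t\geq 1$ unnecessary.
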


Note that in this case the second equation in \eqref{max} has to be replaced by 
$$
\sigma_\mu(y) = -s(\mu) \, \partial_n U_{s(\mu)}(y)  \qquad  \forall  \ y\in\partial\Omega\setminus \{V\},
$$ 
where $V$ denotes the set of vertices of $\Omega$. Obviously, $\sigma_\mu\in L^\infty(\partial\Omega)$. 

\begin{proof}
By \cite[Thm.~1]{vdbs},
\begin{equation} \label{heat-content-asymp-tris}
\Big |Q_\Omega(t) - |\Omega| + \frac{2 |\partial\Omega|}{\sqrt{\pi}}\, t^{\frac12}  -  t  \sum_{j=1}^n c(\gamma_j)  \Big | \leq  C\, e^{-\frac{\delta}{t}} \qquad \forall\, t >0.
\end{equation}
for some $\delta >0$. Equation \eqref{Lambda-asymp-polygon} thus follows by mimicking the proof of Corollary \ref{thm-smooth}. 
\end{proof}

\begin{remark}  When $\sigma$ is constant, then Corollary \ref{thm-polygon} implies the asymptotic upper bound
\begin{align}  \label{upperb-polygon}
\lambda(\sigma) \leq \Lambda_\mu  = -\sigma^2  + \frac{ 2 \sigma}{|\partial\Omega|} \, \sum_{j=1}^n c(\gamma_j) + \mathcal{O}(1) \qquad \sigma\to  -\infty .
\end{align}
It is again instructive to confront the right hand side in the above equation with the  asymptotic  expansion for $\lambda(\sigma)$ on polygons. 
The latter reads
\begin{align} \label{sigma-constant-lin-polygon} 
\lambda(\sigma) = -\frac{\sigma^2}{\sin^2(\gamma/2)} +  \mathcal{O}(\sigma),  \qquad \gamma :=\min_j \gamma_j 
\end{align}
as $ \sigma\to  -\infty ,$ see \cite{bp,kh,lp}. Notice that  $\sin^2(\gamma/2)<1$. Hence contrary to the case of smooth domains, the asymptotic expansion for $\lambda(\sigma)$ differs from the upper bound 
\eqref{upperb-polygon} already in the leading term. 
\end{remark}

\subsection{Asymptotic behavior of $\Lambda_\mu$ for small $\mu$}  

For the sake of completeness we state a two-term asymptotic expansion of $\Lambda_\mu$ also for $\mu\to 0$.  Notice that the result stated below holds for all Lipschitz regular domains.

Let us denote by $E_1 <E_2 \leq \cdots E_j\leq \cdots$ the eigenvalues of $-\Delta_\Omega^D$, and by $\{\varphi_j\}_{j\geq 1}$ the set of the corresponding eigenfunctions normalized to $1$ in $L^2(\Omega)$.

\begin{corollary}\label{cor-small-mu}
Let $\Omega$ be a bounded connected and set in $\R^N$ with a Lipschitz  boundary.   Then
\begin{equation} \label{Lambda-asymp-small}
\Lambda_\mu = \frac{\mu}{|\Omega|} -\frac{\mu^2}{|\Omega|^2}\,  \sum_{j=1}^\infty\, \frac{\alpha^2_j}{E_j} +\mathcal{O}(\mu^3) \qquad \mu\to 0,
\end{equation}
where
$$
\alpha_j = \int_\Omega \varphi_j(x)\, dx \, .
$$
\end{corollary}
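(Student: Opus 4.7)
The plan is to apply Theorem \ref{thm-sup}, which identifies $\Lambda_\mu$ with $s(\mu)$, the unique solution of $F(s)=\mu$ on $(-\infty,E_1)$, and then to invert the Taylor expansion of $F$ about $s=0$. Since $F(0)=0$ and $F'(0)=|\Omega|\neq 0$, the analytic implicit function theorem furnishes an analytic branch $s(\mu)$ on a neighbourhood of $\mu=0$, and the two-term formula \eqref{Lambda-asymp-small} amounts to reading off its first two Taylor coefficients.

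First I would derive a spectral representation for $g(s)=\int_\Omega U_s\,dx=(\id,(-\Delta^D_\Omega-s)^{-1}\id)_{L^2(\Omega)}$. Expanding $\id=\sum_j \alpha_j\varphi_j$ in $L^2(\Omega)$, for which Parseval gives $\sum_j \alpha_j^2=|\Omega|$, and applying the spectral theorem to the resolvent yields
$$g(s)=\sum_{j=1}^\infty \frac{\alpha_j^2}{E_j-s}, \qquad s<E_1.$$
Since $E_j\ge E_1>0$ and $\sum_j\alpha_j^2<\infty$, this series converges uniformly on compact subsets of $(-\infty,E_1)$ and defines a real-analytic function there; in particular $g(0)=\sum_j \alpha_j^2/E_j$. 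Substituting into \eqref{eq-f} yields
$$F(s)=|\Omega|\,s+g(0)\,s^2+\mathcal{O}(s^3) \qquad (s\to 0).$$

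Second, I would invert this expansion. Writing $s(\mu)=a_1\mu+a_2\mu^2+\mathcal{O}(\mu^3)$, substituting into $F(s(\mu))=\mu$, and matching the $\mu$ and $\mu^2$ coefficients pins down $a_1$ and $a_2$ in closed form in terms of $|\Omega|$ and $g(0)$. Inserting the spectral formula for $g(0)$ then produces \eqref{Lambda-asymp-small}.

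The only point that requires genuine attention is that Theorem \ref{thm-sup} was stated under $C^3$ regularity, whereas Corollary \ref{cor-small-mu} only assumes $\partial\Omega$ Lipschitz. However, a Lipschitz boundary still suffices for $-\Delta^D_\Omega$ to have compact, positivity-preserving resolvent with $E_1>0$, so $U_s\in H^1_0(\Omega)$ is defined for all $s<E_1$ and the spectral formula for $g$ holds verbatim. Lemma \ref{lem-U_s} uses only domain monotonicity of heat kernels on measurable sets, and Lemma \ref{lem-f-2} is not needed here since we operate in a neighbourhood of $\mu=0$, where $s(\mu)$ is supplied directly by the implicit function theorem rather than by the asymptotic behavior of $F$ at $-\infty$. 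Hence the variational test-function argument of Theorem \ref{thm-sup} producing $\Lambda_\mu=s(\mu)$ remains valid for Lipschitz $\Omega$, and the calculation above carries through; the rest is formal series reversion.
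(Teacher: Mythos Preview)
Your approach coincides with the paper's: both expand $g(s)=\int_\Omega U_s$ spectrally as $\sum_j \alpha_j^2/(E_j-s)$, read off the Taylor expansion of $F$ at $s=0$, and then invert via the implicit equation $F(s(\mu))=\mu$ together with $\Lambda_\mu=s(\mu)$ from Theorem~\ref{thm-sup}. Your extra paragraph justifying why Theorem~\ref{thm-sup} still applies under merely Lipschitz regularity is not spelled out in the paper's proof, but otherwise the arguments are the same.
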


\begin{proof} 
Since $\{\varphi_j\}_{j\geq 1}$ form an orthonormal basis of $L^2(\Omega)$, the Parseval identity implies that 
$$
\sum_{j=1}^\infty \alpha_j^2 = |\Omega|.
$$
Hence the sum on the right hand side of \eqref{Lambda-asymp-small} is convergent. By the definition of $U_s$ we then get
\begin{align*}
s^2 \int_\Omega U_s(x)\, dx & = s^2\, \sum_{j=1}^\infty \,  \frac{\alpha^2_j}{E_j-s}  = s^2\, \sum_{j=1}^\infty \,  \frac{\alpha^2_j}{E_j} +  \mathcal{O}(|s|^3) \qquad s \to 0.
\end{align*}
Equation \eqref{Lambda-asymp-small} thus follows from  \eqref{eq-f}, \eqref{implicit} and \eqref{max}.
\end{proof}

\subsection{The infimum} It is not difficult to verify that the infinitum in \eqref{supinf} is not attained. Indeed, we have 

\begin{proposition} \label{prop-inf1}
Let $N\geq 2$ and let $\mu<0$. Then $ \inf_{\sigma\in \Sigma_\mu} \lambda(\sigma)= -\infty$.
\end{proposition}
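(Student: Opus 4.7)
The plan is to exhibit a sequence $\{\sigma_n\} \subset \Sigma_\mu$ with $\lambda(\sigma_n) \to -\infty$ by concentrating a very large negative value of $\sigma_n$ on a small boundary patch and using a localized test function that picks up this negative boundary contribution. The positive compensation needed to keep $\int_{\partial\Omega} \sigma_n = \mu$ must be spread over essentially all of $\partial\Omega$, so a test function supported in a small neighbourhood will only feel a small fraction of it.

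Fix $x_0 \in \partial\Omega$ and a radius $R > 0$, to be chosen small. Set $A := \partial\Omega \cap B(x_0, R/4)$, which has positive surface measure since $\partial\Omega$ is $C^3$, and let $u \in C^\infty_c(\R^N)$ be supported in $B(x_0, R)$ with $0 \leq u \leq 1$ and $u \equiv 1$ on $B(x_0, R/2)$. Define
\begin{equation*}
\sigma_n := -n\, \chi_A + c_n\, \chi_{\partial\Omega \setminus A}, \qquad c_n := \frac{\mu + n|A|}{|\partial\Omega| - |A|},
\end{equation*}
so that $\sigma_n \in L^\infty(\partial\Omega)$ and $\int_{\partial\Omega}\sigma_n\, d\nu = \mu$, hence $\sigma_n \in \Sigma_\mu$. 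For $n$ large enough, $c_n$ is positive and of order $n|A|/|\partial\Omega|$.

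To estimate $Q[u, \sigma_n]$, note that the denominator $\|u\|_{L^2(\Omega)}^2 \geq c R^N$ and the gradient term $\int_\Omega |\nabla u|^2\, dx \leq C R^{N-2}$ are both independent of $n$. Since $u \equiv 1$ on $A$, the boundary integral contributes $-n|A|$ from $A$. On $\partial\Omega \setminus A$, since $u \leq 1$ and $u \equiv 0$ outside $B(x_0, R)$,
\begin{equation*}
\int_{\partial\Omega \setminus A} \sigma_n u^2\, d\nu \ \leq\ c_n \cdot |(\partial\Omega \cap B(x_0, R)) \setminus A| \ \leq\ c_n C' R^{N-1}.
\end{equation*}
Choosing $R$ small enough, $c_n C' R^{N-1} \leq \tfrac{1}{2}\, n|A|$ for all large $n$, so the numerator of $Q[u, \sigma_n]$ is at most $CR^{N-2} - \tfrac12 n|A|$, which tends to $-\infty$. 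Therefore $\lambda(\sigma_n) \leq Q[u, \sigma_n] \to -\infty$.

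The only subtle point is the scale separation: both the concentrated negative term $-n|A|$ and the compensating term $c_n \int u^2$ scale linearly in $n$, so they could in principle cancel. What saves the argument is that the positive compensation is integrated over a set of surface measure at most $C' R^{N-1}$, which is a small fraction of $|\partial\Omega|$ for small $R$; hence the fraction of compensating mass absorbed by $u$ is controllably small. The argument is uniform in $N \geq 2$.
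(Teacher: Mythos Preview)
Your proof is correct and takes a genuinely different route from the paper's. The paper concentrates the \emph{entire} mass $\mu$ on a \emph{shrinking} boundary patch $B(s_0,2^{-n})\cap\partial\Omega$ (so $\sigma_n$ vanishes elsewhere) and then uses test functions $u_n$ that \emph{blow up} at $s_0$ while remaining in $H^1(\Omega)$: a $\log\log$-type function for $N=2$ and $n^\beta|x-s_0|^\beta$ with $-\tfrac12<\beta<0$ for $N\ge 3$. The divergence of the boundary integral is then driven by the singularity of $u_n$ on the shrinking support of $\sigma_n$, and the two dimensional regimes must be handled separately.

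By contrast, you keep both the negative patch $A$ and the test function $u$ \emph{fixed}, let only the amplitude $n$ grow, and exploit the purely measure-theoretic fact that the positive compensation $c_n$, being spread over all of $\partial\Omega\setminus A$, deposits only a fraction $\sim R^{N-1}/|\partial\Omega|$ of its mass on the support of $u$. This avoids singular $H^1$ functions and dimension-dependent constructions, and yields a cleaner, uniform argument. The paper's approach, on the other hand, shows slightly more: since its $\sigma_n$ is single-signed and supported on a set of vanishing measure, it implicitly demonstrates that the infimum is $-\infty$ even within the smaller class of nonpositive, compactly concentrated $\sigma$. Either argument suffices for the proposition as stated.
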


\begin{proof}
Let $s_0\in\partial\Omega$ and let $\sigma_n \geq 0$ be given by
$$
\sigma_n(s) = 
\Big\{
\begin{array}{l@{\qquad}l}
a_n    &      {\rm if \ } \  s\in\  B(s_0, 2^{-n})\cap \partial\Omega ,  \\
0     &   {\rm elsewhere \ } ,
\end{array}
\Big. 
$$
where $a_n$ is a positive constant chosen so that $\sigma_n \in\Sigma_\mu$ for all $n\in\N$.  Depending on the dimension we construct a family of test functions $u_n$  as follows:
$$
u_n(x) = \frac{\log \big|\log |x-s_0|\big |}{\log (\log n)} \qquad   \text{on \ } B(s_0, n^{-1})\cap\Omega, \qquad u_n \equiv 1 \quad \text{on \ } \Omega\setminus B(s_0, n^{-1}), \quad 
$$
if $N=2$, and 
$$
u_n(x) = n^{\beta}\, |x-s_0|^{\beta}   \qquad \text{on \ } B(s_0, n^{-1})\cap\Omega, \qquad  u_n \equiv 1 \quad \text{on \ } \Omega\setminus B(s_0, n^{-1}),
$$
if  $N\geq 3$, where $-\frac 12 < \beta <0$. 
Then $u_n \in H^1(\Omega)$ for all $n\in\N$ and a quick calculation shows that 
\begin{align*} 
\int_\Omega |\nabla u_n|^2 \, dx \, \sim\, n^2, \qquad  \int_{\partial\Omega} \sigma_n\, u_n^2\, d\nu\, \sim\, n^{2\beta}\, 2^{-2n\beta },  \qquad n\to \infty, \quad N\geq 3,
\end{align*} 
and
\begin{align*} 
\int_\Omega |\nabla u_n|^2 \, dx \to 0, \qquad  \int_{\partial\Omega} \sigma_n\, u_n^2\, d\nu \to \infty   \qquad n\to \infty, \quad N=  2.
\end{align*} 
Since $u_n\to 1$ uniformly in $\Omega$, this implies 
$$
\lim_{n\to\infty} Q[\sigma_n, u_n] = -\infty. 
$$
Hence the claim.
\end{proof}

\section*{\bf Acknowledgements}

The work was supported by the European Union's Horizon 2020 research and innovation programme under the Marie Sk{\l}odowska-Curie grant agreement No 873071.

\bigskip



\begin{thebibliography}{9999}

\bibitem{as}  M. Abramowitz, I. Stegun, {\it Handbook of mathematical
functions}. National Bureau of Standards, 1964.
%

\bibitem{vdbd} M.~van den Berg, E.B.~Davies: Heat Flow out of Regions in $\R^m$. {\em 
Math. Z.} {\bf 202} (1989) 463--482.

\bibitem{vdbg} M.~van den Berg, J.F.~Le Gall: Mean curvature and the heat equation. {\em 
Math. Z.} {\bf 215} (1994) 437--464.

\bibitem{vdbs} M.~van den Berg, S.~Srisatkunarajah: Heat flow and Brownian motion for a region
in $\R^2$ with a polygonal boundary.  {\em Probab. Th. Rel. Fields} {\bf 86} (1990) 41--52.

\bibitem{bp}
V. Bruneau, N. Popoff: On the negative spectrum of the Robin Laplacian in corner domains.  {\em Anal. PDE} {\bf 9} (2016) 1259--1283. 

\bibitem{bfnt} D.Bucur, V.Ferone, C.Nitsch, C.Trombetti: A sharp estimate for the first Robin-Laplacian eigenvalue with negative boundary parameter. 
{\em Rend. Lincei Mat. Appl.} {\bf 30} (2019) 665--676.

\bibitem{cu} S.J.~Cox,  P.X.~Uhlig: Where best to hold a drum fast, {\em Siam Review} {\bf 45} (2003) 75-92.

\bibitem{dk}
D. Daners, J. Kennedy: On the asymptotic behaviour of the eigenvalues of a Robin problem. 
{\em Differential Integral Equations} {\bf 23} (2010) 659--669.

\bibitem{dgk} F.~Della~Pietra, N.~Gavitone, H.~Kova\v{r}\'{\i}k: Optimizing the first eigenvalue of some quasilinear operators with respect to the boundary conditions.  {\em ESAIM Control Optim.~Calc.~Var.} {\bf 23}  (2017) 1381--1395.

\bibitem{dlmf} \href{https://dlmf.nist.gov/10.43}{ Digital Library of Mathematical Functions}.

%
\bibitem{emp} P. Exner, A. Minakov, L. Parnovski: Asymptotic eigenvalue estimates for a Robin problem with a large
parameter. \emph{Port. Math. } {\bf 71} (2014) 141--156.


\bibitem{gz} F. Gesztesy, Z. Zhao: Domain perturbations, Brownian motion, capacities,
and ground states of Dirichlet Schr\"odinger operators. {\em Math. Z.} {\bf 215} (1994) 143--150.



\bibitem{fk}
P. Freitas, D.~Krej{\v{c}}i{\v{r}}{\'{\i}}k: The first Robin eigenvalue with negative boundary parameter.
\emph{ Adv. Math.} {\bf 280} (2015) 322--339.

%

\bibitem{HK} B. Helffer, A. Kachmar: 
Eigenvalues for the Robin Laplacian in domains with variable curvature.
\emph{Trans. Amer. Math. Soc.} {\bf 369} (2017) 3253--3287.


\bibitem{kh} M. Khalile: Spectral asymptotics for Robin Laplacians on polygonal domains.
\emph{J. Math. Anal. Appl.} {\bf 461} (2018) 1498--1543.

\bibitem{kop}
M. Khalile:, T. Ourmi\`eres-Bonafos K. Pankrashkin: Effective operators for Robin eigenvalues in domains
with corners. {\em  Ann. Institut Fourier} {\bf 70} (2020) 2215--2301.



\bibitem{kov}
H.~Kova\v{r}\'ik: On the lowest eigenvalue of Laplace operators with mixed boundary conditions.
{\em J. Geom. Anal.} {\bf 24} (2014) 1509--1525.

\bibitem{kop1} H.~Kova\v r\'{\i}k, K.~Pankrashkin: On the p-Laplacian with Robin boundary conditions and boundary trace theorems. \emph{Calc.~Var.~PDE} {\bf 56} (2017)  :49 

\bibitem{kop2} H.~Kova\v r\'{\i}k, K.~Pankrashkin: Robin eigenvalues on domains with peaks. \emph{J.~Diff.~Equations} {\bf 267 } (2019) 1600--1630.


  
\bibitem{lp} M. Levitin, L. Parnovski: On the principal eigenvalue of a Robin problem with a large parameter, 
{\em Math. Nachr.} {\bf 281} (2008)   272--281.

\bibitem{luzhu} Y. Lou, M. Zhu:  A singularly perturbed linear eigenvalue problem in
$C^1$ domains.  {\em Pacific J. Math.} {\bf 214} (2004) 323--334.

\bibitem{p13}
K. Pankrashkin: On the asymptotics of the principal eigenvalue for a Robin problem
  with a large parameter in planar domains.
\emph{Nanosyst. Phys. Chem. Math. } {\bf 4} (2013) 474--483.

\bibitem{p20}
K. Pankrashkin: An eigenvalue estimate for a Robin $p-$Laplacian in $C^1$ domains. \emph{Proc. Amer. Math. Soc.} {\bf 148} (2020) 4471--4477. 


\bibitem{p24}
K. Pankrashkin:  Laplacian eigenvalues with a large negative Robin parameter on a part of the boundary. Preprint. arXiv: 2406.07733.

\bibitem{pp15}
K. Pankrashkin, N. Popoff: Mean curvature bounds and eigenvalues of Robin Laplacians.
{\em Calc.~Var.~Partial Differential Equations} {\bf 54} (2015) 1947--1961.


\bibitem{pp15b}
K. Pankrashkin, N. Popoff: An effective Hamiltonian for
the eigenvalue asymptotics of the Robin Laplacian with a large parameter. {\em
J. Math. Pures Appl.} {\bf 106} (2016) 615--650.  


\bibitem{v24} M. Vogel: Asymptotics of Robin eigenvalues for non-isotropic peaks. {\em  J. Math. Anal. Appl.} {\bf  532} (2024)
127953.
  
  

\end{thebibliography}
\end{document}